\newcommand{\G}{\mathbb{G}}
\newcommand{\oQ}{\overline{Q}}
\newcommand{\uA}{\underline{A}}
\newcommand{\Z}{{\mathbb{Z}}}
\newcommand{\bo}{\mathbf{1}}
\newcommand{\alg}{\mathrm{alg}}
\newcommand{\cont}{\mathrm{cont}}
\newcommand{\et}{\mathrm{\acute{e}t}}
\newcommand{\id}{\mathrm{id}}
\newcommand{\Maps}{\mathrm{Maps}\,}
\renewcommand{\mod}{\;\mathrm{mod}\;}
\newcommand{\spec}{\mathrm{spec}\,}
\newcommand{\Coker}{\mathrm{Coker}\,}
\newcommand{\Hom}{\mathrm{Hom}}
\newcommand{\Ker}{\mathrm{Ker}\,}
\newcommand{\tJ}{\tilde{J}}
\newcommand{\ssp}{\mathrm{sp}\,}
\newcommand{\tors}{\mathrm{tors}}
\newcommand{\Ah}{{\mathcal A}}
\newcommand{\Gh}{{\mathcal G}}
\newcommand{\Oh}{{\mathcal O}}
\newcommand{\Sh}{{\mathcal S}}
\newcommand{\eo}{\mathfrak{o}}
\newcommand{\iso}{\stackrel{\sim}{\longrightarrow}}
\newcommand{\tei}{\, | \,}
\newtheorem{theorem}{Theorem}
\newtheorem{coronatheorem}[theorem]{$P$-adic Corona theorem}
\newtheorem{lemma}[theorem]{Lemma}
\newtheorem{prop}[theorem]{Proposition}
\newtheorem{claim}[theorem]{Claim}
\newtheorem{condition}[theorem]{Condition}
\newtheorem{conj}[theorem]{Conjecture}
\newenvironment{proof}{\noindent {\bf Proof}}{\mbox{}\hfill$\Box$}
\begin{document}
\title{Invariant functions on $p$-divisible groups and the $p$-adic Corona problem}
\author{Christopher Deninger}
\date{\ }
\maketitle
\thispagestyle{empty}

\section{Introduction}
\label{sec:1}

In this note we are concerned with $p$-divisible groups $G = (G_{\nu})$ over a complete discrete valuation ring $R$. We assume that the fraction field $K$ of $R$ has characteristic zero and that the residue field $k = R /\pi R$ is perfect of positive characteristic $p$. 

Let $C$ be the completion of an algebraic closure of $K$ and denote by $\eo = \eo_C$ its ring of integers. The group $G_{\nu} (\eo)$ acts on $G_{\nu} \otimes \eo$ by translation. Since $G_{\nu} \otimes K$ is \'etale the $G_{\nu} (C)$-invariant functions on $G_{\nu} \otimes C$ are just the constants. Using the counit it follows that the natural inclusion
\[
\eo \iso \Gamma (G_{\nu} \otimes \eo , \Oh)^{G_{\nu} (\eo)}
\]
is an isomorphism. We are interested in an approximate $\mod \pi^n$-version of this statement. Set $\eo_n = \eo / \pi^n \eo$ for $n \ge 1$. The group $G_{\nu} (\eo)$ acts by translation on $G_{\nu} \otimes \eo_n$ for all $n$.

\begin{theorem}
  \label{t1}
Assume that the dual $p$-divisible group $G'$ is at most one-dimensional and that the connected-\'etale exact sequence for $G'$ splits over $\eo$. Then there is an integer $t \ge 1$ such that the cokernel of the natural inclusion
\[
\eo_n \hookrightarrow \Gamma (G_{\nu} \otimes \eo_n , \Oh)^{G_{\nu} (\eo)}
\]
is annihilated by $p^t$ for all $\nu$ and $n$. 
\end{theorem}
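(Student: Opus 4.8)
The plan is to reinterpret the cokernel as a group-cohomology module and to bound its exponent uniformly in $\nu$. Write $A_\nu = \Gamma(G_\nu \otimes \eo , \Oh)$, a finite free $\eo$-module on which the finite abelian group $\Pi_\nu := G_\nu(\eo)$ acts by translation. The isomorphism $\eo \iso A_\nu^{\Pi_\nu}$ recalled in the introduction, fed into the short exact sequence $0 \to A_\nu \xrightarrow{\pi^n} A_\nu \to A_\nu/\pi^n \to 0$, gives on taking $\Pi_\nu$-cohomology a canonical identification of the cokernel of $\eo_n \hookrightarrow (A_\nu/\pi^n)^{\Pi_\nu}$ with $H^1(\Pi_\nu , A_\nu)[\pi^n]$ (the image of $\eo_n$ being exactly the constants). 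Hence it suffices to find $t$ with $p^t H^1(\Pi_\nu , A_\nu) = 0$ for all $\nu$; the same $t$ then annihilates every cokernel.

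\textbf{Step 2: Dualize, split, and apply Künneth.}
Next I would use Cartier duality to identify $A_\nu$ with the $\eo$-linear dual of $\Gamma(G'_\nu \otimes \eo , \Oh)$; under this identification the characters of $G_\nu$ are the eigenvectors for the translation action and are indexed by the points of $G'_\nu$. The hypothesis that the connected-\'etale sequence of $G'$ splits over $\eo$ gives $G' \cong G^{\prime 0} \times G^{\prime \et}$, hence dually $G \cong L \times H$ with $L = (G^{\prime 0})^\vee$ and $H = (G^{\prime \et})^\vee$ of multiplicative type. Correspondingly $A_\nu \cong A^L_\nu \otimes_\eo A^H_\nu$ and $\Pi_\nu = L_\nu(\eo) \times H_\nu(\eo)$ acts factorwise. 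A Hochschild--Serre (Künneth) argument, using that $H^0$ equals $\eo$ on each factor, bounds the exponent of $H^1(\Pi_\nu , A_\nu)$ by the product of the exponents of $H^1(L_\nu(\eo), A^L_\nu)$ and $H^1(H_\nu(\eo), A^H_\nu)$. It is therefore enough to bound each factor separately.

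\textbf{Step 3: The multiplicative factor.}
Since the residue field of $\eo$ is algebraically closed, $H \otimes \eo \cong \mu_{p^\infty}^r$ splits, so $A^H_\nu \cong \bigotimes_{i=1}^r \eo[T_i]/(T_i^{p^\nu}-1)$ with $\mu_{p^\nu}(\eo)^r$ acting by $T_i \mapsto \zeta_i T_i$. A direct computation in the cyclic case gives $H^1(\mu_{p^\nu}(\eo), \eo[T]/(T^{p^\nu}-1)) \cong \bigoplus_{j=1}^{p^\nu-1} \eo/(\zeta^j - 1)\eo$, and each summand is killed by $p$ because $v(\zeta^j - 1) \le v(p)$ for every nontrivial $p$-power root of unity $\zeta^j$, whence $p \in (\zeta^j-1)\eo$. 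Thus this factor is annihilated by $p^r$, uniformly in $\nu$.

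\textbf{Step 4: The connected factor (the crux).}
The decisive and, I expect, hardest step is a uniform bound for $H^1(L_\nu(\eo), A^L_\nu)$, where $L = (G^{\prime 0})^\vee$ is dual to the connected part $G^{\prime 0}$, which by hypothesis has dimension at most one. The same eigen-decomposition shows each character eigencomponent is again killed by $p$, since the eigenvalues $\langle g , \chi\rangle$ are $p$-power roots of unity with $v(\langle g,\chi\rangle - 1) \le v(p)$ when they are $\neq 1$; what really has to be controlled is the failure of $A^L_\nu$ to be induced (equivalently free) over $\eo[L_\nu(\eo)]$. For an \'etale $L$ one has $A^L_\nu \cong \eo[L_\nu(\eo)]$ and $H^1$ vanishes by Shapiro's lemma, but for a genuinely connected $L$ it need not. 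The plan is to peel off the \'etale quotient of $L$ by Shapiro and to treat the connected core through the explicit one-dimensional formal group $G^{\prime 0}$: after choosing a coordinate, the algebra $A^L_\nu$ and the pairing $\langle\,,\,\rangle$ are governed by the $[p^\nu]$-division values, whose valuations are pinned down by the Newton polygon of a one-dimensional formal group. It is exactly one-dimensionality that renders these valuation estimates, and hence the exponent of $H^1(L_\nu(\eo), A^L_\nu)$, uniform in $\nu$; in higher dimension no such uniform control is available, which is where the hypothesis is genuinely used.
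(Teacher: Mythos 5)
Your Steps 1--3 are sound and run parallel to the paper's reductions, just in a cohomological rather than a coinvariant formulation: the long exact sequence identifying the cokernel with $H^1(G_{\nu}(\eo),A_{\nu})[\pi^n]$ is a clean alternative to the paper's duality step (which instead converts invariants for $G$ into coinvariants for $G'$ via the perfect pairing \eqref{eq:6}), the connected--\'etale splitting is used in the same way, and your computation for the multiplicative factor is essentially the paper's $\G_m$ example and its treatment of the \'etale case. (The K\"unneth step for $H^1$ of a product group over $\eo$ needs a little care with Tor terms, but the paper's coinvariant version shows this is not a real obstacle.)

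The genuine gap is Step 4, which is not a proof but an announcement of a hope --- and Step 4 is the entire content of the theorem. You correctly identify the difficulty: the character eigenvectors (the group-like elements, i.e.\ the points of the dual) do not span $A^L_{\nu}$ over $\eo$ when $L$ is dual to a genuinely connected group, so the eigencomponent argument that works for $\mu_{p^{\nu}}$ says nothing here. But ``the valuations of the $[p^{\nu}]$-division values are pinned down by the Newton polygon'' does not address the actual question, which is whether the ideal generated by the elements $h-1$ ($h$ running over a basis of the relevant Tate module) contains $p^t$ times the augmentation ideal \emph{uniformly in $\nu$}. Working level by level nothing in your sketch prevents the exponent of $H^1(L_{\nu}(\eo),A^L_{\nu})$ from growing with $\nu$; the paper has to pass to the limit $\varprojlim_{\nu}(A_{\nu}\otimes_R\eo)\cong\eo[[X_1,\dots,X_d]]$ (Lemma \ref{t5n} and Claim \ref{t5}) precisely to make a uniform statement. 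Once there, the proof requires two nontrivial inputs that have no counterpart in your plan: Tate's Proposition 11 (injectivity of $G(\eo)\to\Hom_{\cont}(TG',U)$ together with the logarithm diagram) to establish the quantitative lower bound $\|H(x)-\bo\|\ge\delta\|x\|$ of Proposition \ref{t8}, and van~der~Put's $p$-adic Corona theorem for $d=1$ to convert that analytic estimate into the ideal-theoretic conclusion $(h_1-1,\dots,h_r-1)\supset(X)$ in $C\langle X\rangle$, whence $p^tI\subset J$. This last conversion is exactly where a Nullstellensatz-type statement fails in $C\langle X_1,\dots,X_d\rangle$ and where the one-dimensionality hypothesis enters (Conjecture \ref{t10} is only known for $d=1$). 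Your sketch supplies neither ingredient, so the theorem is not proved.
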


The example of $\G_m = (\mu_{p^{\nu}})$ in section \ref{sec:2} may be helpful to get a feeling for the statement. 

We expect the theorem to hold without any restriction on the dimension of $G$ as will be explained later. Its assertion is somewhat technical but the proof may be of interest because it combines some of the main results of Tate on $p$-divisible groups with van~der~Put's solution of his one-dimensional $p$-adic Corona problem.

The classic corona problem concerns the Banach algebra $H^{\infty} (D)$ of bounded analytic functions on the open unit disc $D$. The points of $D$ give maximal ideals in $H^{\infty} (D)$ and hence points of the Gelfand spectrum $\hat{D} = \ssp H^{\infty} (D)$. The question was whether $D$ was dense in $\hat{D}$, (the set $\hat{D} \setminus \overline{D}$ being the ``corona''). This was settled affirmatively by Carleson \cite{C}. The analogous question for the polydisc $D^d$ is still open for $d \ge 2$. An equivalent condition for $D^d$ to be dense in $\ssp H^{\infty} (D^d)$ is the following one, \cite{H}, Ch. 10:

\begin{condition}
  \label{t2}
If $f_1 , \ldots , f_n$ are bounded analytic functions in $D^d$ such that for some $\delta > 0$ we have
\[
\max_{1 \le i \le n} |f_i (z)| \ge \delta \quad \mbox{for all} \; z \in D^d \; ,
\]
then $f_1 , \ldots , f_n$ generate the unit ideal of $H^{\infty} (D^d)$.
\end{condition}

In \cite{P} van~der~Put considered the analogue of condition \ref{t2} with $H^{\infty} (D^d)$ replaced by the algebra of bounded analytic $C$-valued functions on the $p$-adic open polydisc $\Delta^d$ in $C^d$, i.e. by the algebra
\[
C \langle X_1 , \ldots , X_d \rangle = \eo [[ X_1 , \ldots , X_d]] \otimes_{\eo} C \; .
\]
He called this $p$-adic version of condition \ref{t2} the $p$-adic Corona problem and verified it for $d = 1$. The general case $d \ge 1$ was later treated by Bartenwerfer \cite{B} using his earlier results on rigid cohomology with bounds.

In the proof of theorem \ref{t1} applying Tate's results from \cite{T} we are led to a question about certain ideals in $C \langle X_1 , \ldots , X_d \rangle$, which for $d = 1$ can be reduced to van~der~Put's $p$-adic Corona problem. For $d \ge 2$, I did not succeed in such a reduction. However it seems possible that a generalization of Bartenwerfer's theory might settle that question. 

It should be mentioned that van~der~Put's term ``$p$-adic Corona problem'' for the $p$-adic analogue of condition \ref{t2} is somewhat misleading. Namely as pointed out in \cite{EM} a more natural analogue whould be the question whether $\Delta^d$ was dense in the Berkovich space of $C \langle X_1 , \ldots , X_d \rangle$. This is not known, even for $d = 1$. The difference between the classic and the $p$-adic cases comes from the fact discovered by van~der~Put that contrary to $H^{\infty} (D^d)$ the algebra $C \langle X_1 , \ldots, X_d \rangle$ contains maximal ideals of infinite codimension.

While working on this note I had helpful conversations and exchanges with Siegfried Bosch, Alain Escassut, Peter Schneider, Annette Werner and Thomas Zink. I would like to thank them all very much.

\section{An example and other versions of the theorem}
\label{sec:2}

Consider an affine group scheme $\Gh$ over a ring $S$ with Hopf-algebra $\Ah = \Gamma (\Gh , \Oh)$, comultiplication $\mu : \Ah \to \Ah \otimes_S \Ah$ and counit $\varepsilon : \Ah \to S$. The operation of $\Gh (S) = \Hom_S (\Ah , S)$ on $\Gamma (\Gh , \Oh)$ by translation is given by the map
\begin{equation}
  \label{eq:1}
  \Gh (S) \times \Ah \to \Ah \; , \; (\chi , a) \mapsto (\chi \otimes \id) \mu (a)
\end{equation}
where $(\chi \otimes \id) \mu$ is the composition
\[
\Ah \xrightarrow{\mu} \Ah \otimes_S \Ah \xrightarrow{\chi \otimes \id} S \otimes_S \Ah = \Ah \; .
\]
Given a homomorphism of groups $P \to \Gh (S)$ we may view $\Ah$ as a $P$-module. The composition $S \to \Ah \xrightarrow{\varepsilon} S$ being the identity we have an isomorphism
\begin{equation}
  \label{eq:2}
  \Ker (\Ah^P \xrightarrow{\varepsilon} S) \iso \Ah^P / S \quad \mbox{mapping $a$ to} \; a + S \; .
\end{equation}
The inverse sends $a + S$ to $a - \varepsilon (a) \cdot 1$.

{\bf Example} The theorem is true for $\G_m = (\mu_{p^{\nu}})$.

\begin{proof}
  Set $V = \eo_n [X , X^{-1}] / (X^{p^{\nu}} - 1)$. Applying formulas \eqref{eq:1} and \eqref{eq:2} with $\Gh = \mu_{p^{\nu}} \otimes \eo_n$ and $P = \mu_{p^{\nu}} (\eo) \to \Gh (\eo_n)$ we see that the cokernel of the map
  \begin{equation}
    \label{eq:3}
    \eo_n \to \Gamma (\mu_{p^{\nu}} \otimes \eo_n , \Oh)^{\mu_{p^{\nu}} (\eo)}
  \end{equation}
is isomorphic to the $\eo_n$-module:
\[
\{ \oQ \in V \tei \oQ (\zeta X) = \oQ (X) \; \mbox{for all} \; \zeta \in \mu_{p^{\nu}} (\eo) \; \mbox{and} \; \oQ (1) = 0 \} \; .
\]
Lift $\oQ$ to a Laurent polynomial $Q = \sum_{\mu \in \Sh} a_{\mu} X^{\mu}$ in $\eo [X , X^{-1}]$ where $\Sh = \{ 0 , \ldots , p^{\nu}-1 \}$.

Then we have:
\begin{equation}
  \label{eq:4}
  (\zeta^{\mu} - 1) a_{\mu} \equiv 0 \mod \pi^n \quad \mbox{for} \; \mu \in \Sh \; \mbox{and} \; \zeta \in \mu_{p^{\nu}} (\eo)
\end{equation}
and
\begin{equation}
  \label{eq:5}
  \sum_{\mu \in \Sh} a_{\mu} \equiv 0 \mod \pi^n  \; .
\end{equation}
For any non-zero $\mu$ in $\Sh$ choose $\zeta \in \mu_{p^{\nu}} (\eo)$ such that $\zeta^{\mu} \neq 1$. Then $\zeta^{\mu} -1$ divides $p$ in $\eo$ and hence \eqref{eq:4} implies that $p a_{\mu} \equiv 0 \mod \pi^n$ for all $\mu \neq 0$. Using \eqref{eq:5} it follows that we have $p a_0 \equiv 0 \mod \pi^n$ as well. Hence $pQ \mod \pi^n$ is zero and therefore $p \oQ = 0$ as well. Thus $p$ annihilates the $\eo_n$-module \eqref{eq:3} for all $\nu \ge 1$ and $n \ge 1$. 
\end{proof}

Now assume that $S = R$ and that $\Gh / R$ is a finite, flat group scheme. Consider the Cartier dual $\Gh' = \spec \Ah'$ where $\Ah' = \Hom_R (\Ah , R)$. The perfect pairing of finite free $\eo_n$-modules
\begin{equation}
  \label{eq:6}
  (\Ah \otimes \eo_n) \times (\Ah' \otimes \eo_n) \to \eo_n
\end{equation}
induces an isomorphism
\begin{equation}
  \label{eq:7}
  \Ker ((\Ah \otimes \eo_n)^{\Gh (\eo)} \xrightarrow{\varepsilon} \eo_n) \iso \Hom_{\eo_n} ((\Ah' \otimes \eo_n)_{\Gh (\eo)} / \eo_n , \eo_n) \; .
\end{equation}
Using \eqref{eq:2} it follows that if $p^t$ annihilates $(\Ah' \otimes \eo_n)_{\Gh (\eo)} / \eo_n$ then $p^t$ annihilates $(\Ah \otimes \eo_n)^{\Gh (\eo)} / \eo_n$ as well. (The converse is not true in general.)

Hence theorem \ref{t1} follows from the next result (applied to the dual $p$-divisible group).

\begin{theorem}
  \label{t3}
Assume that the $p$-divisible group $G$ is at most one-dimensional and that the connected-\'etale exact sequence for $G$ splits over $\eo$. Then there is an integer $t \ge 1$ such that $p^t$ annihilates the cokernel of the natural map
\[
\eo_n \to \Gamma (G_{\nu} \otimes \eo_n , \Oh)_{G'_{\nu} (\eo)}
\]
for all $\nu$ and $n$.
\end{theorem}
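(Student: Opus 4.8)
The plan is to prove Theorem~\ref{t3} by combining a reduction along the connected-\'etale sequence with Tate's analytic description of the characters of the connected part, and then feeding the resulting lower bounds into van~der~Put's Corona theorem. First I would recast the coinvariants cokernel as an ideal-membership problem. For a finite flat group scheme $\Gh = G_{\nu}$ over $\eo$ with Cartier dual $\Gh' = G'_{\nu}$, the points $\Gh'(\eo)$ act on $\Ah = \Gamma (\Gh , \Oh)$ by the action dual to translation of \eqref{eq:6}--\eqref{eq:7}, which is multiplication by the group-like units $e_{\xi}$ attached to the characters $\xi \in \Gh'(\eo)$. Writing $I = \Ker (\varepsilon)$ for the augmentation ideal, the decomposition $\Ah = \eo \cdot 1 \oplus I$ together with \eqref{eq:2} identifies the cokernel of $\eo_n \to (\Ah \otimes \eo_n)_{\Gh'(\eo)}$ with $I / \mathfrak{b}$, where $\mathfrak{b} = (e_{\xi} - 1 : \xi \in \Gh'(\eo)) \subseteq I$. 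Thus ``$p^t$ annihilates the cokernel'' becomes the ideal inclusion $p^t I \subseteq \mathfrak{b}$, which is stable under $\otimes \eo_n$ by right-exactness. Using the assumed splitting $G \otimes \eo \cong G^0 \otimes \eo \times G^{\et} \otimes \eo$ I would factor $\Ah_{\nu}$ as a tensor product of Hopf algebras and $G'_{\nu}$ as $(G^0_{\nu})' \times (G^{\et}_{\nu})'$; the coinvariants then factor, and a uniform annihilator for each factor yields one for the product. This reduces the theorem to the \'etale part and the connected one-dimensional part.

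For the \'etale part (so that the one-dimensionality is carried by $\dim G^0 = 1$), the group $G^{\et}_{\nu}$ is constant over $\eo$, isomorphic to $(\Z / p^{\nu})^{h'}$, with dual a power of $\mu_{p^{\nu}}$. Here the computation is the dual of the Example of Section~\ref{sec:2}: the characters act on the delta-function basis by roots of unity, each nontrivial $\zeta^j - 1$ divides $p$, and one reads off that a fixed power of $p$ annihilates the cokernel, uniformly in $\nu$ and $n$. The connected part is the heart of the matter. Using the formal coordinate of $G^0$ I would realise $\Ah^0_{\nu} \otimes C \subseteq C \langle X \rangle = \eo [[X]] \otimes_{\eo} C$, with $\Ah^0_{\nu} = \eo [[X]] / ([p^{\nu}] (X))$ and $I^0_{\nu}$ the ideal $(X)$. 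The characters $\xi \in (G^0_{\nu})'(\eo)$ lift to the $p^{\nu}$-torsion characters $e_{\xi} \in \eo [[X]]$ of the formal group, i.e. group-like power series mapping $G^0$ into $\hat{\G}_m$. Applying Tate's results from \cite{T} --- the logarithm of $G^0$ and the Hodge--Tate/period description of $\Hom (G^0 , \hat{\G}_m)$ --- I would describe each $e_{\xi} - 1$ as an explicit bounded analytic function on the open disc $\Delta$, with $e_{\xi} - 1 \equiv c_{\xi} X + \cdots$ and common zero set reduced to $X = 0$ once $[p^{\nu}](X)$ is adjoined. The crucial input is a Corona-type lower bound: at a nonzero $p^{\nu}$-torsion point $z$ of exact order $p^m$, perfectness of the Cartier pairing $G^0_{\nu}(C) \times (G^0_{\nu})'(C) \to \mu_{p^{\nu}}$ shows $\langle z , - \rangle$ surjects onto $\mu_{p^m} \supseteq \mu_p$, so some $\xi$ gives $e_{\xi}(z) = \langle z , \xi \rangle$ a primitive $p$-th root of unity and hence $|e_{\xi}(z) - 1| = p^{-1/(p-1)}$, a bound independent of $z$, $\nu$ and $m$ (and $(G^0_{\nu})'(\eo) = (G^0_{\nu})'(C)$ since $(G^0_{\nu})'$ is finite, hence proper, over $\eo$).

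With this lower bound I would invoke van~der~Put's solution of the one-dimensional $p$-adic Corona problem \cite{P} in its bounded-Nullstellensatz form: functions in $C \langle X \rangle$ whose common zero set is $\{0\}$ and which are bounded below away from $0$ generate the ideal $(X)$ up to a bounded power of $p$. This would give $p^t X \in (e_{\xi} - 1 : \xi) + ([p^{\nu}](X))$ in $\eo [[X]]$ with $t$ independent of $\nu$; reducing modulo $[p^{\nu}](X)$ and then modulo $\pi^n$ yields $p^t I^0_{\nu} \subseteq \mathfrak{b}_{\nu}$, as required, uniformly in $\nu$ and $n$. The hard part will be the previous paragraph: extracting from Tate's theory an estimate on the characters that is uniform in the level $\nu$. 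While the torsion-point bound $p^{-1/(p-1)}$ is clean and level-independent, I must control the periods $c_{\xi}$ and the behaviour of $e_{\xi}$ at non-torsion points and near the boundary of $\Delta$ --- where, as $\nu$ grows, the torsion points (the zeros of $[p^{\nu}](X)$) become dense --- well enough that van~der~Put's estimate produces a single exponent $t$ for all $\nu$. This passage between the finite level $G^0_{\nu}$ and the disc is exactly where the Corona phenomenon genuinely lives, and is also where the restriction $\dim G = 1$, i.e. $d = 1$, is forced: for $d \ge 2$ the analogous ideal question is the still-open polydisc Corona problem.
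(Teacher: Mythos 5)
Your reduction steps (coinvariants as $I/\mathfrak{b}$, the connected--\'etale tensor factorization, and the \'etale computation via delta functions and $\zeta^a - 1 \mid p$) all match the paper's argument. But in the connected case there is a genuine gap, and it sits exactly where you yourself flag ``the hard part.'' To invoke van~der~Put's Corona theorem you need a lower bound $\max_{\xi} |e_{\xi}(x) - 1| \ge \delta |x|$ valid for \emph{every} $x$ in the open disc $\Delta$, with $\delta$ independent of $\nu$. The bound you actually establish --- $|e_{\xi}(z) - 1| = p^{-1/(p-1)}$ at nonzero torsion points $z$, via perfectness of the Cartier pairing --- concerns only the countable set of torsion points and says nothing about generic $x \in \Delta$, in particular nothing near the boundary where the zeros of $[p^{\nu}](X)$ accumulate. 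Without the global estimate the Corona theorem cannot be applied, and your proposal does not supply it; you only record that it ``must be controlled.'' This estimate is the real content of the paper's Proposition \ref{t8}: it is proved not from the Cartier pairing at finite level but from the injectivity of Tate's map $\alpha : G(\eo) \to \Hom_{\cont}(TG', U)$ (Proposition 11 of \cite{T}), combined with a contradiction argument using the logarithm exact sequence for $\|x\| \ge \varepsilon$ and a linearization $H(X) = \bo + AX + (\deg \ge 2)$ with $\|Av\| \ge a\|v\|$ near the origin.

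A second, related divergence: you work at each finite level with the ideal $(e_{\xi} - 1 : \xi) + ([p^{\nu}](X))$ in $\eo[[X]]/([p^{\nu}](X))$, which makes uniformity in $\nu$ an extra burden. The paper instead passes to the projective limit first (its Claims \ref{t5} and \ref{t6}, using a Mittag--Leffler argument and the surjectivity of $G'_{\nu+1}(\eo) \to G'_{\nu}(\eo)$), so that a single ideal $J \subset \eo[[X_1,\ldots,X_d]]$ generated by $h - 1$ for $h$ in the Tate module $TG'$ controls all levels at once; one then needs only the single inclusion $p^t (X) \subset J$, obtained from Proposition \ref{t8} plus the $d=1$ case of the Nullstellensatz-type statement (Conjecture \ref{t10}, reduced to van~der~Put by dividing by $X$). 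I would recommend restructuring along these lines: the limit formulation is what makes a single exponent $t$ for all $\nu$ come out automatically, and Tate's injectivity statement is the tool that converts ``common zero set is $\{0\}$'' into the quantitative bound the Corona theorem requires.
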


For a finite flat group scheme $\Gh = \spec \Ah$ over a ring $S$, the group
\[
\Gh' (S) = \Hom_{S-\alg} (\Hom_S (\Ah , S),S) \subset \Ah
\]
consists of the group-like elements in $\Ah$ i.e. the units $a$ in $\Ah$ with $\mu (a) = a \otimes a$. In this way $\Gh' (S)$ becomes a subgroup of the unit group $\Ah^*$ and hence $\Gh' (S)$ acts on $\Ah$ by multiplication. On the other hand $\Gh' (S)$ acts on $\Gh'$ by translation, hence on $\Ah' = \Gamma (\Gh' , \Oh)$ and hence on $\Ah'' = \Ah$. Using \eqref{eq:1} one checks that the two actions of $\Gh' (S)$ on $\Ah$ are the same. This leads to the following description of the cofixed module in theorem \ref{t3}. Set $A_{\nu} = \Gamma (G_{\nu} , \Oh)$ and let $J_{\nu}$ be the ideal in $A_{\nu} \otimes_R \eo$ generated  by the elements $h-1$ with $h$ group-like in this Hopf-algebra over $\eo$. Thus $J_{\nu}$ is also the $\eo$-submodule of $A_{\nu} \otimes_R \eo$ generated by the elements $ha - a$ for $h \in G'_{\nu} (\eo)$ and $a \in A_{\nu} \otimes_R \eo$. Then we have the formula
\begin{equation}
  \label{eq:8}
  \Gamma (G_{\nu} \otimes \eo_n , \Oh)_{G'_{\nu} (\eo)} = (A_{\nu} \otimes_R \eo_n) / J_{\nu} (A_{\nu} \otimes_R \eo_n) \; .
\end{equation}
This implies an isomorphism:
\begin{equation}
  \label{eq:9}
  \Coker (\eo_n \to \Gamma (G_{\nu} \otimes \eo_n, \Oh)_{G'_{\nu} (\eo)}) = \Coker (\eo \to (A_{\nu} \otimes_R \eo) / J_{\nu}) \otimes_{\eo} \eo_n \; .
\end{equation}
Hence theorem \ref{t3} and therefore also theorem \ref{t1} follow from the next claim:

\begin{claim}
  \label{t4}
For a $p$-divisible group $G = (G_{\nu})$ as in theorem \ref{t3} there exists an integer $t \ge 1$ such that $p^t$ annihilates the cokernel of the natural map \\
$\eo \to (A_{\nu} \otimes_R \eo) / J_{\nu}$ for all $\nu \ge 1$.
\end{claim}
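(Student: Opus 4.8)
The plan is to use the assumed splitting of the connected-\'etale sequence to factor the problem, dispose of the \'etale factor by an elementary cyclotomic computation, and reduce the connected factor to van~der~Put's $p$-adic Corona theorem \cite{P} using Tate's theory \cite{T}. Since the connected-\'etale sequence splits over $\eo$, I would write $G_\nu\otimes\eo = G^0_\nu\times G^{\et}_\nu$ with $G^0$ connected of dimension $\le 1$ and $G^{\et}$ \'etale. The Hopf algebra then factors as a tensor product, the group-like elements factor accordingly, and so $(A_\nu\otimes\eo)/J_\nu$ is the tensor product of the analogous quotients for the two factors; moreover the map from $\eo$ factors as $\eo\xrightarrow{g}N\xrightarrow{f\otimes\id}M\otimes N$ for the two one-variable maps $f,g$. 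A short cokernel-of-a-composite argument ($\Coker g\to\Coker(f\otimes g)\to\Coker f\otimes N\to 0$) shows that if $p^{t_0}$ and $p^{t_1}$ annihilate the cokernels for $G^0$ and $G^{\et}$, then $p^{t_0+t_1}$ annihilates the cokernel for $G$. Thus it suffices to treat the two factors separately.

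For the \'etale factor, the residue field of $\eo$ is algebraically closed, so $G^{\et}\otimes\eo$ is constant and its Cartier dual is a split group of multiplicative type; the situation mirrors the $\G_m$-example of section~\ref{sec:2}. Evaluating at the torsion points identifies the relevant quotient with a product $\prod_x \eo/I_x$, where $I_x$ is generated by the numbers $\chi(x)-1$ as $\chi$ runs over the characters, $I_0 = 0$, and for $x\neq 0$ one can choose $\chi$ making $\chi(x)$ a primitive $p$-th root of unity, so that $v(\chi(x)-1)=1/(p-1)<1$. Hence $p$ already annihilates this cokernel, uniformly in $\nu$.

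The connected factor is the heart of the matter. Writing $A^0_\nu\otimes\eo = \eo[[X]]/([p^\nu](X))$ for the one-dimensional formal group $G^0$, a direct manipulation (using that every group-like $h$ satisfies $h(0)=1$, so $J^0_\nu\subseteq(X)$) shows that the cokernel of $\eo\to(A^0_\nu\otimes\eo)/J^0_\nu$ is annihilated by $p^t$ exactly when $p^t X$ lies in the ideal of $\eo[[X]]$ generated by $[p^\nu](X)$ and the elements $h-1$ with $h\in(G^0_\nu)'(\eo)$. The group-like $h$ are precisely the characters $h\colon G^0\to\hat{\G}_m$, i.e.\ power series with $h(0)=1$, so the quotients $(h-1)/X$ lie in $\eo[[X]]$ and define bounded analytic functions of sup-norm $\le 1$ on the open disc $\Delta=G^0(\eo)$. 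I would first work with the ideal generated by \emph{all} $h-1$ in $\eo[[X]]$: if the functions $(h-1)/X$ generate the unit ideal of $C\langle X\rangle$, say $1=\sum_i\gamma_i(h_i-1)/X$ with $\gamma_i\in C\langle X\rangle$ of controlled norm, then clearing denominators by a fixed power $p^t$ gives $g_i=p^t\gamma_i\in\eo[[X]]$ and hence $p^t X=\sum_i g_i(h_i-1)$. Only finitely many characters $h_i$ occur, so they have order bounded by some $p^{\nu_0}$; for $\nu\ge\nu_0$ they lie in $(G^0_\nu)'(\eo)$, giving $p^t\overline{X}\in J^0_\nu$, while the finitely many $\nu<\nu_0$ are absorbed into $t$. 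This yields a uniform $t$.

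The main obstacle is supplying the quantitative hypothesis needed to invoke van~der~Put's Corona theorem, namely the uniform lower bound $\max_i|(h_i(x)-1)/x|\ge\delta$ for all $x\in\Delta$, with $\delta>0$ independent of $x$, together with the resulting control on the norms of the $\gamma_i$. This is exactly where Tate's results on $p$-divisible groups enter: the leading terms (logarithms) of the characters $h$ span the one-dimensional $\Lie(G^0)\otimes C$ non-degenerately via the Hodge--Tate decomposition, which I expect to yield $\max_h|h(x)-1|\ge\delta|x|$ on all of $\Delta$. Making this bound and the denominator estimate precise and uniform in $\nu$ is the crux; and it is precisely the one-dimensionality of $G^0$ that permits the reduction to the single-variable Corona theorem, the multivariable analogue being the open problem flagged in the introduction.
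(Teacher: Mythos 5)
Your overall architecture coincides with the paper's: split off the \'etale factor using the assumed splitting of the connected-\'etale sequence, kill its contribution by the cyclotomic observation that $\zeta_p - 1$ divides $p$, and reduce the connected one-dimensional factor to van~der~Put's Corona theorem after dividing the group-like elements minus $1$ by $X$. Your cokernel-of-a-composite reduction and your clearing of denominators (which needs only $C\langle X\rangle = \eo[[X]]\otimes_{\eo}C$, not norm control on the $\gamma_i$) are sound, and reducing the relation $p^tX\in J_0\subset\eo[[X]]$ modulo $[p^{\nu}](X)$ at the end is a legitimate, even slightly cleaner, substitute for the paper's Mittag--Leffler passage to the limit. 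The confusion about the ``order'' of the $h_i$ is harmless: the $h_i$ are elements of $TG'$, not torsion characters, and their images in $A_{\nu}\otimes_R\eo$ are group-like for every $\nu$, so no threshold $\nu_0$ is needed.

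The genuine gap is exactly the estimate you defer, and your proposed mechanism for it is insufficient. You need $\max_j|h_j(x)-1|\ge\delta\|x\|$ for \emph{all} $x\in\Delta^d$ (the paper's Proposition \ref{t8}), and you suggest it should follow from the non-degeneracy of the leading terms of the $h_j$, i.e.\ the injectivity of the Hodge--Tate map $d\alpha$. That argument only yields the bound in a neighbourhood of the origin, where the linear term $AX$ dominates the higher-order terms. Away from the origin it gives nothing: $\Delta^d$ is not compact, so even knowing that $H(x)=\bo$ forces $x=0$ (Proposition \ref{t7}) does not produce a uniform positive lower bound on $\|H(x)-\bo\|$ on the region $\|x\|\ge\varepsilon$. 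The paper closes this with a separate argument using the injectivity of Tate's period map $\alpha: G(\eo)\to\Hom_{\cont}(TG',U)$ itself (not just its differential), the fact that the logarithm $L$ is a local homeomorphism, and the discreteness of the torsion of $U$: from a hypothetical sequence $x^{(i)}$ with $\|x^{(i)}\|\ge\varepsilon$ and $H(x^{(i)})\to\bo$ one extracts torsion elements $z^{(i)}=x^{(i)}\ominus y^{(i)}$ with $H_0(z^{(i)})\to 1$, forces $z^{(i)}=0$ eventually, and derives $x^{(i)}\to 0$, a contradiction. Without this global half of the estimate your appeal to the Corona theorem has no hypothesis to stand on, so the proof is incomplete at its central step.
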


As a first step in the proof of claim \ref{t4} we reduce to the case where $G$ is either \'etale or connected. For simplicity set $\Gh = G_{\nu} \otimes_R \eo = \spec \Ah$ and define $\Gh^0 , \Gh^{\et} , \Ah^0 , \Ah^{\et}$ similarly. By assumption we have isomorphisms $\Gh = \Gh^0 \times_{\eo} \Gh^{\et}$ and $\Ah = \Ah^0 \otimes_{\eo} \Ah^{\et}$ as group schemes, resp. Hopf-algebras over $\eo$. There is a compatible splitting of the group-like elements over $\eo$:
\[
\Gh' (\eo) = \Gh^{0'} (\eo) \times \Gh^{\et '} (\eo) \; .
\]
For elements
\[
h^0 \in \Gh^{0'} (\eo) \subset \Ah^0 \quad \mbox{and} \quad h^{\et} \in \Gh^{\et '} (\eo) \subset \Ah^{\et}
\]
consider the identity:
\[
h^0 \otimes h^{\et} - 1 = h^0 \otimes (h^{\et} - 1) + (h^0 - 1) \otimes 1 \quad \mbox{in} \; \Ah \; .
\]
It implies that we have
\[
J = \Ah^0 \otimes J^{\et} + J^0 \otimes \Ah^{\et} \quad \mbox{in} \; \Ah
\]
where $J$ is the ideal of $\Ah$ generated by the elements $h-1$ for $h \in \Gh' (\eo)$ and $J^0 , J^{\et}$ are defined similarly. Hence we have natural surjections
\[
\Ah^0 / J^0 \otimes \Ah^{\et} / J^{\et} \to \Ah / J
\]
and
\[
\Coker (\eo \to \Ah^0 / J^0) \otimes \Coker (\eo \to \Ah^{\et} / J^{\et}) \to \Coker (\eo \to \Ah / J) \; .
\]
Hence it suffices to prove claim \ref{t4} in the cases where $G$ is either connected or \'etale. The \'etale case is straightforeward: We have $G \otimes_R \eo = ((\underline{\Z / p^{\nu}})^h)_{\nu \ge 0}$ where for an abstract group $A$ we denote by $\uA$ the corresponding \'etale group scheme. Hence $G'_{\nu} = \mu^h_{p^{\nu}}$ and $G'_{\nu} (\eo) = \mu_{p^{\nu}} (\eo)^h$. The inclusion 
\[
\mu_{p^{\nu}} (\eo)^h = \Hom ((\Z / p^{\nu})^h , \eo^*) \subset \Maps ((\Z / p^{\nu})^h , \eo) = A_{\nu} \otimes \eo
\]
identifies $\mu_{p^{\nu}} (\eo)^h$ with the group like elements in $A_{\nu} \otimes \eo$. 

The ideal $J_{\nu}$ of $A_{\nu} \otimes \eo$ is given by:
\[
J_{\nu} = (\chi_{\zeta} - 1 \tei \zeta \in \mu_{p^{\nu}} (\eo)^h)
\]
where $\chi_{\zeta}$ is the character of $(\Z / p^{\nu})^h$ defined by the equation
\[
\chi_{\zeta} ((a_1 , \ldots , a_h)) = \zeta^{a_1}_1 \cdots \zeta^{a_h}_h \quad \mbox{where} \; \zeta = (\zeta_1 , \ldots , \zeta_h) \; .
\]
The functions $\delta_a$ for $a \in (\Z / p^{\nu})^h$ given by $\delta_a (a) = 1$ and $\delta_a (b) = 0$ if $b \neq a$ generate $A_{\nu} \otimes \eo$ as an $\eo$-module. For $a \neq 0$ choose $\zeta \in \mu_{p^{\nu}} (\eo)^h$ with $\zeta^a \neq 1$. Then we have $p = (\zeta^a - 1) \beta$ for some $\beta \in \eo$. Define $f_a \in A_{\nu} \otimes \eo$ by setting
\[
f_a (a) = \beta \quad \mbox{and} \quad f_a (b) = 0 \; \mbox{for} \; b \neq a \; .
\]
We then find:
\[
f_a (\chi_{\zeta} - 1) = p \delta_a \quad \mbox{in} \; A_{\nu} \otimes \eo \; .
\]
Hence we have $p \delta_a \in J_{\nu}$ for all $a \neq 0$ and therefore $p$ annihilates $\Coker (\eo \to (A_{\nu} \otimes \eo) / J_{\nu})$.

The next two sections are devoted to the much more interesting case where $G$ is connected.

\section{The connected case I ($p$-adic Hodge theory)}
\label{sec:3}
In this section we reduce the assertion of claim \ref{t4} for connected $p$-divisible groups of arbitrary dimension to an assertion on ideals in $C \langle X_1 , \ldots , X_d \rangle$. For this reduction we use theorems of Tate in \cite{T}.

Thus let $G = (G_{\nu})$ be a connected $p$-divisible group of dimension $d$ over $R$ and set $A = \varprojlim_{\nu} A_{\nu}$ where $G_{\nu} = \spec A_{\nu}$. 

Consider the projective limit $A = \varprojlim A_n$ with the topology inherited from the product topology $\prod A_n$ where the $A_n$'s are given the $\pi$-adic topology. This topology on $A$ is the one defined by the $R$-submodules $K_n + \pi^k A$ for $n , k \ge 1$ where $K_n = \Ker (A \to A_n)$. Equivalently it is defined by the spaces $K_n + \pi^n A$ for $n \ge 1$. In \cite{T} section (2.2) it is shown that $A$ is isomorphic to $R [[X_1 , \ldots , X_d]]$ as a topological $R$-algebra. If $M$ denotes the maximal ideal of $A$, then according to \cite{T} Lemma 0 the topology of $A$ coincides with the $M$-adic topology. Let $A \hat{\otimes}_R \eo$ be the completion of $A \otimes_R \eo$ with respect to the linear topology on $A \otimes_R \eo$ given by the subspaces $M^n \otimes_R \eo + A \otimes_R \pi^n \eo$. 

\begin{lemma}
  \label{t5n}
We have
\[
\varprojlim (A_n \otimes_R \eo) = A \hat{\otimes}_R \eo = \eo [[ X_1 , \ldots , X_d ]]
\]
as topological rings.
\end{lemma}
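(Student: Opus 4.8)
The plan is to deduce both equalities directly from the two topological facts about $A$ that are quoted from \cite{T}, transported across the functor $-\otimes_R\eo$ by flatness. So I would first record the two inputs I intend to lean on: (i) $\eo$ is flat over the discrete valuation ring $R$, being a torsion-free $R$-module; and (ii) $\eo$ is $\pi$-adically complete and separated. Point (ii) holds because $C$ is complete and its value group is dense, so the $\pi$-adic topology on $\eo$ coincides with the valuation topology, while $\bigcap_k \pi^k\eo = 0$. A consequence I will use repeatedly is that $N\otimes_R\eo$ is $\pi$-adically complete for every finite free $R$-module $N$; in particular each $A_n\otimes_R\eo$ is $\pi$-adically complete, since $G_n$ is finite flat over the DVR $R$ and hence $A_n$ is finite free over $R$.

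For the second equality I would first note that $\pi\in M$ gives $\pi^n A\subseteq M^n$, so inside $A\otimes_R\eo$ the defining neighbourhood $M^n\otimes_R\eo + A\otimes_R\pi^n\eo$ is just $M^n(A\otimes_R\eo)$; thus $A\hat{\otimes}_R\eo$ is nothing but the $M$-adic completion of $A\otimes_R\eo$. Flatness gives $(A\otimes_R\eo)/M^n(A\otimes_R\eo)\cong (A/M^n)\otimes_R\eo$, and under Tate's identification $A\cong R[[X_1,\ldots,X_d]]$ with $M=(\pi,X_1,\ldots,X_d)$ a monomial-basis computation identifies this with $\eo[[X_1,\ldots,X_d]]/(\pi,X_1,\ldots,X_d)^n$. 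Passing to the limit exhibits $A\hat{\otimes}_R\eo$ as the $(\pi,X_1,\ldots,X_d)$-adic completion of $\eo[[X_1,\ldots,X_d]]$; but the latter ring is $(X_1,\ldots,X_d)$-adically complete by construction and $\pi$-adically complete by (ii), hence already complete, which yields $A\hat{\otimes}_R\eo=\eo[[X_1,\ldots,X_d]]$.

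For the first equality I would use Tate's Lemma~0, which says that on $A$ the $M$-adic filtration $\{M^n\}$ and the filtration $\{K_n+\pi^k A\}$ define the same topology, i.e. are mutually cofinal. Because $\eo$ is flat, applying $-\otimes_R\eo$ preserves the relevant inclusions and sends these to mutually cofinal filtrations $\{M^n(A\otimes_R\eo)\}$ and $\{K_n(A\otimes_R\eo)+\pi^k(A\otimes_R\eo)\}$ of $A\otimes_R\eo$; the two therefore have the same completion, which by the previous paragraph is $A\hat{\otimes}_R\eo$. It remains to identify the completion for the second filtration with $\varprojlim_n(A_n\otimes_R\eo)$. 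Since the transition maps $A_{n+1}\to A_n$ are surjective, $A\to A_n$ is surjective and $A/K_n\cong A_n$; flatness then gives $(A\otimes_R\eo)/(K_n(A\otimes_R\eo)+\pi^k(A\otimes_R\eo))\cong (A_n\otimes_R\eo)/\pi^k(A_n\otimes_R\eo)$. Taking the limit over $k$ first and invoking the $\pi$-adic completeness of $A_n\otimes_R\eo$ collapses the $k$-limit, leaving exactly $\varprojlim_n(A_n\otimes_R\eo)$, and one checks that all these identifications are homeomorphisms.

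The hard part is not any single computation but the care forced by the fact that $\eo$ is neither Noetherian nor discretely valued, so the usual completion lemmas cannot be applied verbatim. Everything therefore rests on the two structural inputs isolated at the start: establishing the $\pi$-adic completeness of $\eo$ (and hence of the $A_n\otimes_R\eo$) by hand, and using flatness of $\eo$ over $R$ both to commute $-\otimes_R\eo$ with the quotients $A/M^n$ and $A/K_n$ and to transport the cofinality of the two filtrations of $A$ to $A\otimes_R\eo$. The only genuine bookkeeping is the interchange of the double limit over $(n,k)$, which is legitimised precisely by that completeness.
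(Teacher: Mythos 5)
Your proof is correct and follows essentially the same route as the paper's: both rest on Tate's Lemma~0 (cofinality of the $M$-adic filtration with $\{K_n+\pi^kA\}$), the finite freeness of each $A_n$ over $R$ together with the $\pi$-adic completeness of $\eo$, and the interchange of the double limit over $(n,k)$. The paper merely packages these steps as a single chain of identifications starting from $\varprojlim_n(A_n\otimes_R\eo)$, whereas you work outward from $A\hat{\otimes}_R\eo$; the substance is identical.
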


\begin{proof}
Consider the isomorphisms
  \begin{eqnarray*}
    \varprojlim_n (A_n \otimes_R \eo) & = & \varprojlim_n (A_n \otimes_R (\varprojlim_k \eo / \pi^k \eo)) \\
& \overset{(1)}{=} & \varprojlim_n \varprojlim_k (A_n \otimes_R \eo / \pi^k \eo) \\
& = & \varprojlim_n \varprojlim_k (A \otimes_R \eo) / ((K_n + \pi^k A) \otimes_R \eo + A \otimes_R \pi^k \eo) \\
& \overset{(2)}{=} & \varprojlim_n (A \otimes_R \eo) / ((K_n + \pi^n A) \otimes_R \eo + A \otimes_R \pi^n \eo) \\
& \overset{(3)}{=} & \varprojlim_n (A \otimes_R \eo) / (M^n \otimes_R \eo + A \otimes_R \pi^n \eo) \\
& = & A \hat{\otimes}_R \eo \\
& \overset{(4)}{=} & \eo [[ X_1 , \ldots , X_d]] \; .
  \end{eqnarray*}
Here (1) holds because $\varprojlim$ commutes with finite direct sums, (2) is true by cofinality, (3) holds because the topology on $A$ can also be described as the $M$-adic topology. Finally (4) follows from the definition of $A \hat{\otimes}_R \eo$ and the fact that $A = R [[X_1 , \ldots , X_d]]$.
\end{proof}

The $\eo$-algebra $A \hat{\otimes}_R \eo = \varprojlim_{\nu} (A_{\nu} \otimes_R \eo)$ contains the ideal $\tJ = \varprojlim_{\nu} J_{\nu}$. 

\begin{claim}
  \label{t5}
We have
\[
A \hat{\otimes}_R \eo / (\eo + \tJ) = \varprojlim_{\nu} (A_{\nu} \otimes_R \eo / (\eo + J_{\nu})) \; .
\]
\end{claim}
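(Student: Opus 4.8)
The plan is to deduce Claim~\ref{t5} from the standard $\varprojlim$--$\varprojlim^1$ exact sequence attached to an inverse system of short exact sequences. Throughout write $B_\nu = A_\nu \otimes_R \eo$ and $B = A \hat{\otimes}_R \eo = \varprojlim_\nu B_\nu$, the latter identification being Lemma~\ref{t5n}. The inclusions $G_\nu \hookrightarrow G_{\nu+1}$ of the $p$-divisible group induce surjections $A_{\nu+1} \to A_\nu$, so the transition maps $B_{\nu+1} \to B_\nu$ are surjective ring homomorphisms, and in particular $\{B_\nu\}$ is Mittag-Leffler. For each $\nu$ I have the short exact sequence of $\eo$-modules
\[
0 \to (\eo + J_\nu) \to B_\nu \to B_\nu / (\eo + J_\nu) \to 0 ,
\]
and these assemble into a short exact sequence of inverse systems. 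Applying $\varprojlim$ gives the six-term exact sequence, so the assertion will follow once I establish
\[
\varprojlim_\nu (\eo + J_\nu) = \eo + \tJ \qquad \text{and} \qquad \varprojlim_\nu^1 (\eo + J_\nu) = 0 .
\]

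For the vanishing of $\varprojlim^1$ it suffices to show that $\{\eo + J_\nu\}$ is Mittag-Leffler, and I will in fact show its transition maps are surjective. Since $\eo \hookrightarrow B_\nu$ is compatible with the transition maps and carried identically, it is enough to see that $J_{\nu+1} \to J_\nu$ is onto. Here $J_\nu$ is the ideal generated by the elements $h - 1$ with $h$ group-like in $B_\nu$, i.e. $h \in G'_\nu (\eo)$. As the surjective transition map is a homomorphism of Hopf algebras it sends group-like elements to group-like elements, realising the map $G'_{\nu+1}(\eo) \to G'_\nu(\eo)$ dual to $G_\nu \hookrightarrow G_{\nu+1}$. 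This map is surjective: it is the multiplication-by-$p$ map of the $p$-divisible group $G'$, hence an fppf epimorphism of finite flat group schemes, and since $C$ is algebraically closed while $G'_\nu(\eo) = G'_\nu(C)$ by the valuative criterion, it is onto on $\eo$-points. A surjective ring homomorphism carries the ideal generated by the $h-1$ onto the ideal generated by their images, so $J_{\nu+1} \to J_\nu$ is surjective; this also gives $\varprojlim_\nu J_\nu = \tJ$.

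The identification of $\varprojlim(\eo + J_\nu)$ rests on the observation that the sum $\eo + J_\nu$ is direct. Every generator $h - 1$ lies in the augmentation ideal $\Ker \varepsilon$, since group-like elements satisfy $\varepsilon(h) = 1$; hence $J_\nu \subseteq \Ker \varepsilon$, while $\varepsilon$ restricts to the identity on $\eo$, so $\eo \cap J_\nu = 0$. Consequently $\eo + J_\nu = \eo \oplus J_\nu$, and the transition maps respect this decomposition, acting as the identity on $\eo$ and as the surjections $J_{\nu+1}\to J_\nu$ on the second summand. Passing to the limit yields $\varprojlim_\nu (\eo + J_\nu) = \eo \oplus \tJ = \eo + \tJ$, the last equality because $\eo \cap \tJ = 0$ (again by the augmentation applied on the limit). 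Feeding the two displayed facts into the six-term sequence shows that $B \to \varprojlim_\nu (B_\nu/(\eo+J_\nu))$ is surjective with kernel $\eo + \tJ$, which is exactly the asserted isomorphism.

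The step I expect to be the main obstacle is the surjectivity of the transition maps $J_{\nu+1} \to J_\nu$, equivalently of $G'_{\nu+1}(\eo) \to G'_\nu(\eo)$: this is the one place where one genuinely uses that $G'$ is $p$-divisible and that $\eo$ is the valuation ring of an algebraically closed field, rather than pure homological algebra. Everything else is formal bookkeeping of inverse limits, made clean by the vanishing $\eo \cap J_\nu = 0$.
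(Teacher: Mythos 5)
Your proposal is correct and follows essentially the same route as the paper: surjectivity of the Hopf-algebra transition maps and of $G'_{\nu+1}(\eo)\to G'_\nu(\eo)$ gives surjectivity of $J_{\nu+1}\to J_\nu$, hence the Mittag--Leffler property and vanishing of $\varprojlim^1$ for $(\eo+J_\nu)$, and the directness of the sum $\eo+J_\nu$ (via $J_\nu\subset\Ker\varepsilon_\nu$) identifies the limit of the subsystems with $\eo+\tJ$. You merely spell out in more detail the justification the paper leaves implicit for the surjectivity of $G'_{\nu+1}(\eo)\to G'_\nu(\eo)$ and for the identification $\varprojlim(\eo+J_\nu)=\eo\oplus\tJ$.
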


\begin{proof}
  The inclusion $G_{\nu} \subset G_{\nu+1}$ corresponds to a surjection of Hopf-algebras $A_{\nu+1} \to A_{\nu}$. Hence $A_{\nu+1} \otimes_R \eo \to A_{\nu} \otimes_R \eo$ is surjective as well and group-like elements are mapped to group-like elements. The map on group-like elements is surjective because it corresponds to the surjective map $G'_{\nu+1} (\eo) \to G'_{\nu} (\eo)$. Note here that $G'_{\mu} (\eo) = G'_{\mu} (C)$ for all $\mu$. It follows that the map $J_{\nu+1} \to J_{\nu}$ is surjective as well. In the exact sequence of projective systems
\[
0 \to (\eo + J_{\nu}) \to (A_{\nu} \otimes_R \eo) \to (A_{\nu} \otimes_R \eo / (\eo + J_{\nu})) \to 0
\]
the system $(\eo + J_{\nu})$ is therefore Mittag--Leffler. Hence the sequence of projective limits is exact and the claim follows because the sum $\eo + J_{\nu}$ is direct: Group-like elements of $A_{\nu} \otimes_R \eo$ are mapped to $1$ by the counit $\varepsilon_{\nu}$. Therefore we have
\begin{equation}
  \label{eq:10}
  J_{\nu} \subset I_{\nu} := \Ker (\varepsilon_{\nu} : A_{\nu} \otimes_R \eo \to \eo) \; .
\end{equation}
The sum $\eo + I_{\nu}$ being direct we are done.
\end{proof}

Because of claim \ref{t5} and the surjectivity of the maps $A_{\nu+1} \otimes_R \eo \to A_{\nu} \otimes_R \eo$, claim \ref{t4} for connected groups is equivalent to the next assertion:

\begin{claim}
  \label{t6}
Let $G$ be a connected $p$-divisible group with $\dim G \le 1$. Then there is some $t \ge 1$ such that $p^t$ annihilates 
\[
\Coker (\eo \to A \hat{\otimes}_R \eo / \tJ) \; .
\]
\end{claim}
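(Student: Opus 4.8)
The plan is to reduce claim \ref{t6} to producing a single relation of the form $p^t X \in \tJ$, and then to manufacture such a relation by feeding suitable quotients of the generators of $\tJ$ into van~der~Put's Corona theorem. First I would record the shape of $\tJ$. Every group-like element $h$ of $A \hat{\otimes}_R \eo = \eo [[X_1 , \ldots , X_d]]$ satisfies $\varepsilon (h) = 1$, so $h - 1$ lies in the augmentation ideal $I = \Ker (\varepsilon)$; hence $\tJ \subseteq I$. Since $\eo [[X_1 , \ldots , X_d]] = \eo \oplus I$ as $\eo$-modules, the cokernel in the claim is
\[
\Coker (\eo \to A \hat{\otimes}_R \eo / \tJ) = I / \tJ \; ,
\]
so it suffices to find $t \ge 1$ with $p^t I \subseteq \tJ$. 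If $\dim G = 0$ there is nothing to prove, so assume $\dim G = 1$, i.e. $A \hat{\otimes}_R \eo = \eo [[X]]$ and $I = X \eo [[X]]$. As $\tJ$ is an ideal, $p^t I \subseteq \tJ$ follows as soon as I exhibit the single relation $p^t X \in \tJ$.

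Next I would split off the common zero at the origin. For a group-like $h$ we have $h - 1 \in X \eo [[X]]$, so $g_h := (h-1)/X$ again lies in $\eo [[X]]$. I claim it is enough to find finitely many group-like elements $h_1 , \ldots , h_n$ whose associated $g_i := g_{h_i}$ generate the unit ideal of $C \langle X \rangle = \eo [[X]] \otimes_{\eo} C$. Indeed, a Bézout identity $\sum_i a_i g_i = 1$ with $a_i \in C \langle X \rangle$ has bounded coefficients, so $p^t a_i \in \eo [[X]]$ for some $t$; multiplying by $p^t X$ then gives
\[
p^t X = \sum_i (p^t a_i)(h_i - 1) \in \tJ \; ,
\]
as required.

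To produce such $g_i$ I would invoke van~der~Put's solution of the one-dimensional $p$-adic Corona problem \cite{P}: the $g_i \in \eo [[X]]$ generate the unit ideal of $C \langle X \rangle$ provided they satisfy the Corona condition
\[
\max_{1 \le i \le n} |g_i (z)| \ge \delta \quad \mbox{for all} \; z \in \Delta
\]
for some $\delta > 0$, where $\Delta$ is the open unit disc. It therefore remains to choose finitely many group-like $h_i$ for which this uniform lower bound holds, and here I would use Tate's results from \cite{T}. The group-like elements of $\eo [[X]]$ are exactly the homomorphisms $\hat{G} \to \hat{\G}_m$ of formal groups over $\eo$, i.e. the Tate module $T_p (G')$ of the dual $p$-divisible group, and Tate's logarithm $\ell = \log_{\hat{G}}$ together with the Hodge--Tate comparison describes each $h$ through a period $w$ and $\ell$. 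Using that $T_p (G') \otimes_{\Z_p} C$ surjects, up to a bounded power of $p$, onto the one-dimensional cotangent space of $\hat{G}$ at the origin, I can pick some $h_i$ with $g_i (0)$ of bounded denominator, which secures the bound on a disc $|z| \le r_0$; on the remaining annulus $r_0 \le |z| < 1$ I would use that finitely many characters already separate points, so that for each such $z$ some value $h_i (z)$ stays a fixed distance from $1$, giving $|g_i (z)| \ge \delta$ there.

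The main obstacle is precisely this last verification of the Corona condition over the whole open disc. Controlling $|h (z) - 1|$ uniformly as $|z| \to 1$ requires understanding the zeros of the characters $h$ in terms of $\ell$ and the distribution of the periods in $T_p (G')$, and it is exactly here that the one-dimensionality of $G$ is used. This is the step that I do not expect to generalize to $d \ge 2$, even though the Corona theorem itself is known in all dimensions by Bartenwerfer \cite{B}.
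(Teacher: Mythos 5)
Your overall strategy coincides with the paper's: identify the cokernel with $I/\tJ$ where $I=(X)$ is the augmentation ideal, reduce to the single relation $p^t X \in \tJ$, divide the elements $h-1$ (for $h$ in $TG'$) by $X$, and feed the quotients into van~der~Put's one-dimensional $p$-adic Corona theorem, clearing denominators in the resulting B\'ezout identity. That reduction is exactly Proposition \ref{t11} combined with the surjection \eqref{eq:15} in the paper, and it is carried out correctly in your write-up.

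The genuine gap is the one you yourself flag as ``the main obstacle'': the uniform lower bound $\max_i |h_i(x)-1| \ge \delta\,\|x\|$ on all of $\Delta$, which is the entire content of Proposition \ref{t8} and is \emph{not} a routine verification. Your sketch for the region $\|x\|\ge r_0$ --- ``finitely many characters already separate points, so for each such $z$ some value $h_i(z)$ stays a fixed distance from $1$'' --- does not work as stated: Tate's injectivity of $\alpha$ (Proposition 11 of \cite{T}) gives only the pointwise statement that for each $x\ne 0$ some $h_i(x)\ne 1$, and since the region $r_0\le \|x\|<1$ in $C$ is not compact, pointwise non-vanishing does not upgrade to a uniform $\delta>0$. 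The paper closes this by a contradiction argument using the full strength of Tate's diagram $(*)$: if $H(x^{(i)})\to \bo$ along a sequence with $\|x^{(i)}\|\ge\varepsilon$, then $\log H(x^{(i)}) = A(L(x^{(i)}))\to 0$; since $A=dH$ is an injective linear map of finite-dimensional $C$-vector spaces it is bounded below, so $L(x^{(i)})\to 0$; since $L$ is a local homeomorphism one writes $x^{(i)} = y^{(i)}\oplus z^{(i)}$ with $y^{(i)}\to 0$ and $z^{(i)}$ torsion, and the discreteness of $U_{\tors}$ together with the bijectivity of $\alpha_0$ forces $z^{(i)}=0$ eventually, contradicting $\|x^{(i)}\|\ge\varepsilon$. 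Near the origin the bound comes from the same lower bound on the linear term $AX$ dominating the degree-$\ge 2$ tail. Without this argument (or an equivalent one) your proof is incomplete at precisely the step that carries the arithmetic input of the theorem.
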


For connected $G$ of arbitrary dimension consider the Tate module of $G'$
\[
TG' = \varprojlim_{\nu} G'_{\nu} (C) = \varprojlim_{\nu} G'_{\nu} (\eo) \subset \varprojlim_{\nu} (A_{\nu} \otimes_R \eo) = A \hat{\otimes}_R \eo \; .
\]
Let $J$ be the ideal of $A \hat{\otimes}_R \eo $ generated by the elements $h-1$ for $h \in TG'$. The image of $J$ under the reduction map $A \hat{\otimes}_R \eo \to A_{\nu} \otimes_R \eo$ lies in $J_{\nu}$. It follows that $J \subset \tJ$. With $I_{\nu}$ as in \eqref{eq:10} we set $I = \varprojlim_{\nu} I_{\nu}$, an ideal in $A \hat{\otimes}_R \eo$. We have $J \subset \tJ \subset I$ because of \eqref{eq:10}. Since $A \hat{\otimes}_R \eo = \eo \oplus I$, we get a surjection
\begin{equation}
  \label{eq:11}
  I / J \twoheadrightarrow \Coker (\eo \to A\hat{\otimes}_R \eo / \tJ) \; .
\end{equation}
Thus claim \ref{t6} will be proved if we can show that $p^t I \subset J$ at least for $\dim G = 1$. The construction in \cite{T} section (2.2) shows that under the isomorphism of $\eo$-algebras
\[
A \hat{\otimes}_R \eo = \eo [[X_1 , \ldots , X_d]] \quad \mbox{we have} \; I = (X_1 , \ldots , X_d) \; .
\]
We will view the elements of $A \hat{\otimes}_R \eo$ and in particular those of $J$ as analytic functions on the open $d$-dimensional polydisc 
\[
\Delta^d = \{ x \in C^d \tei |x_i| < 1 \; \mbox{for all} \; i \} \; .
\]
Because of the inclusion $J \subset I$ all functions in $J$ vanish at $0 \in \Delta^d$. There are no other common zeroes:

\begin{prop}[Tate]
  \label{t7}
The zero set of $J$ in $\Delta^d$ consists only of the origin $o \in \Delta^d$.
\end{prop}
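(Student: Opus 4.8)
The plan is to unwind what membership in the zero set means. Since $A\hat\otimes_R\eo=\eo[[X_1,\dots,X_d]]$ and $J$ is generated by the functions $h-1$ with $h\in TG'$, a point $x\in\Delta^d$ lies in the zero set of $J$ precisely when $h(x)=1$ for all $h\in TG'$. Each such $h$ is group-like, hence a homomorphism of formal groups $\hat G_{\eo}\to\hat\G_m$ over $\eo$, and evaluation at $x\in\Delta^d=\hat G(\emm_C)$ defines a homomorphism $\ev_x\colon TG'\to 1+\emm_C$, $h\mapsto h(x)$ (evaluation being a continuous $\eo$-algebra map). As every $h$ lies in $1+I$ we have $h(o)=1$, so $o$ always belongs to the zero set; everything comes down to showing that $\ev_x$ is nontrivial for each $0\neq x\in\Delta^d$. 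I would prove this by treating the torsion and the non-torsion elements of the $\Z_p$-module $\hat G(\emm_C)$ separately.

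For a torsion point $x\neq o$ we have $x\in G_\nu(\eo)=G_\nu(C)$ for some $\nu$, and here nondegeneracy of the Cartier pairing $G_\nu\times G'_\nu\to\mu_{p^\nu}$ does the job: it yields $g\in G'_\nu(\eo)$ with $\langle x,g\rangle\neq1$. Under the identification of $G'_\nu(\eo)$ with the group-like elements of $A_\nu\otimes_R\eo$ this pairing is just evaluation, $\langle x,g\rangle=g(x)$. Since the projection $TG'\twoheadrightarrow G'_\nu(\eo)$ is surjective — the transition maps $G'_{\nu+1}(\eo)\to G'_\nu(\eo)$ being onto, as already observed — I may lift $g$ to some $h\in TG'$, and then $h(x)=g(x)\neq1$.

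For a non-torsion $x$ I would pass to the logarithm $\log_G$ of the formal group $G$ over $C$. Replacing $x$ by $[p^k]x$ for $k$ large I may assume $x$ lies in a polydisc on which $\log_G$ converges: this loses nothing, for $h([p^k]x)=h(x)^{p^k}$, so finding $h$ with $h([p^k]x)\neq1$ also gives $h(x)\neq1$, while $[p^k]$ contracts $\Delta^d$ toward $o$ and $[p^k]x$ stays nonzero and non-torsion. Now $v:=\log_G(x)\in\Lie G\otimes_{\eo}C$ is nonzero, the kernel of $\log_G$ being exactly the torsion subgroup. By naturality of the logarithm applied to the homomorphism $h$,
\[
\log\bigl(h(x)\bigr)=dh\bigl(\log_G(x)\bigr)=dh(v),
\]
where $dh\in\omega_G\otimes_{\eo}C=(\Lie G\otimes_{\eo}C)^{\ast}$ is the differential of $h$. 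Hence it suffices to produce $h\in TG'$ with $dh(v)\neq0$.

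This last step is the heart of the matter and is where Tate's theorems from \cite{T} are indispensable. The assignment $h\mapsto dh$ is exactly the Hodge--Tate ($d\log$) map, and Tate's theory of $p$-divisible groups shows that the induced $C$-linear map $TG'\otimes_{\Z_p}C\to\omega_G\otimes_{\eo}C$ is surjective (this is the Hodge--Tate decomposition for $G'$). Consequently the functionals $\{dh:h\in TG'\}$ span the whole cotangent space $(\Lie G\otimes_{\eo}C)^{\ast}$ and cannot all annihilate the nonzero vector $v$; choosing $h$ with $dh(v)\neq0$ gives $h(x)\neq1$ and finishes the argument. The genuine obstacle is thus this surjectivity statement of Tate; granting it, only the elementary functoriality and contraction properties of $\log_G$ and the nondegeneracy of Cartier duality remain, which is why the result is attributed to Tate.
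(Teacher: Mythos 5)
Your proof is correct, and it rests on the same underlying input as the paper's --- Tate's Proposition~11 --- but you arrive there by a genuinely longer route: where the paper simply observes that a common zero $x$ of $J$ gives the trivial element of $\Hom_{\cont}(TG',U)$ and then cites the injectivity of $\alpha: G(\eo)\to\Hom_{\cont}(TG',U)$ verbatim from \cite{T}, you effectively reprove that injectivity. Your split into torsion and non-torsion points, with Cartier--Pontryagin nondegeneracy of $G_\nu(C)\times G'_\nu(C)\to\mu_{p^\nu}(C)$ handling the former and the injectivity of $d\alpha$ (equivalently, the surjectivity of $h\mapsto dh$, $TG'\otimes_{\Z_p}C\to \omega_G\otimes C$, which you correctly identify as the Hodge--Tate statement) handling the latter via $\log(h(x))=dh(\log_G x)$ and $\Ker L=G(\eo)_{\tors}$, is essentially Tate's own proof of Proposition~11 read off from the diagram $(*)$ that the paper reproduces in the proof of Proposition~\ref{t8}. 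All the individual steps check out: the surjectivity of $TG'\to G'_\nu(\eo)$ is noted in the paper itself, $h([p^k]x)=h(x)^{p^k}$ and the persistence of non-torsion under $[p^k]$ are correct (though the detour through $[p^k]x$ is not really needed, since Tate's logarithm $L$ is defined on all of $G(\eo)$), and the duality between injectivity of $d\alpha$ and the spanning of $\omega_G\otimes C$ by the $dh$ is valid for finite-dimensional $C$-vector spaces. What the paper's version buys is brevity by black-boxing Proposition~11 entirely; what yours buys is an explicit explanation of \emph{why} the result is attributed to Tate, namely that the only genuinely deep ingredient is the Hodge--Tate surjectivity.
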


\begin{proof}
  The $\eo$-valued points of the $p$-divisible group $G$,
\[
G (\eo) = \varprojlim_i \varinjlim_{\nu} G_{\nu} (\eo / \pi^i \eo)
\]
can be identified with continuous $\eo$-algebra homomorphisms
\[
G (\eo) = \Hom_{\cont , \alg} (A , \eo) = \Hom_{\cont , \alg} (A \hat{\otimes}_R \eo , \eo) \; .
\]
Moreover we have a homeomorphism 
\[
\Delta^d \iso G (\eo) \quad \mbox{via} \quad x \mapsto (f \mapsto f (x)) \; .
\]
Here $f \in A\hat{\otimes}_R \eo$ is viewed as a formal power series over $\eo$. The group structure on $G (\eo)$ induces a Lie group structure on $\Delta^d$ with $0 \in \Delta^d$ corresponding to $1 \in G (\eo)$. Let $U$ be the group of $1$-units in $\eo$. Proposition 11 of \cite{T} asserts that the homomorphism of Lie groups
\begin{equation}
\label{eq:12}
\alpha : \Delta^d = G (\eo) \to \Hom_{\cont} (TG' , U) \; , \; x \mapsto (h \mapsto h (x))
\end{equation}
is {\it injective}. Note here that $TG' \subset A \hat{\otimes}_R \eo$. Let $x \in \Delta^d$ be a point in the zero set of $J$. Then we have $(h-1) (x) = 0$ i.e. $h (x) = 1$ for all $h \in TG'$. Hence $x$ maps to $1 \in \Hom_{\cont} (TG' , U)$. Since $\alpha$ is injective, it follows that we have $x = 0$.
\end{proof}

If a Hilbert Nullstellensatz were true in $C \langle X_1 , \ldots , X_d \rangle$ we could conclude that we had $\sqrt{J \otimes C} = I \otimes C$ and with further arguments from \cite{T} we would get $p^t I \subset J$. However the Nullstellensatz does not hold in the ring $C \langle X_1 , \ldots , X_d \rangle$. 

In the next section we will provide a replacement which is proved for $d = 1$ and conjectured for $d \ge 2$. In order to apply it to the ideal $J \otimes C$ in $C \langle X_1 , \ldots , X_d \rangle$ we need to know the following assertion which is stronger than proposition \ref{t7}. For $x \in C^m$ set $\| x \| = \max_i |x_i|$. 

\begin{prop}
  \label{t8}
Let $h_1 , \ldots , h_r$ be a $\Z_p$-basis of $TG' \subset \eo [[ X_1 , \ldots , X_d ]]$ and set $H (x) = (h_1 (x) , \ldots , h_r (x))$ and $\bo = (1 , \ldots , 1)$. Then there is a constant $\delta > 0$ such that we have:
\[
\| H (x) - \bo \| \ge \delta \|x \| \quad \mbox{for all} \; x \in \Delta^d \; .
\]
\end{prop}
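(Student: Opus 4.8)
The plan is to write $\phi(x)=\|H(x)-\bo\|=\max_i|h_i(x)-1|$ and to combine a linear (infinitesimal) estimate valid near the origin with a descent argument using multiplication by $p$ that forces every point of large norm back to the origin or onto a nonzero $p$-torsion point. Throughout I use that each $h_i$ lies in $\eo[[X_1,\ldots,X_d]]$, so its coefficients have absolute value $\le 1$, and that $h_i$ is group-like, i.e. a homomorphism of formal groups $G\to\hat{\G}_m$; in particular $h_i([p]x)=h_i(x)^p$.

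First I would treat a neighbourhood of $0$. Writing $h_i-1=\ell_i+R_i$ with $\ell_i$ the linear part and $R_i$ of order $\ge 2$, integrality of the coefficients gives $|R_i(x)|\le\|x\|^2$ and $|\ell_i(x)|\le\|x\|$ on $\Delta^d$. By Tate's results (the surjectivity of $d\log\colon TG'\otimes C\to t_G^*\otimes C$ that underlies Proposition 11 of \cite{T}) the forms $\ell_1,\ldots,\ell_r$ span $(C^d)^*$, so the linear map $L\colon C^d\to C^r$, $x\mapsto(\ell_i(x))_i$, is injective; as an injective linear map of finite-dimensional normed $C$-spaces it is bounded below, $\|L(x)\|\ge c\|x\|$ for some $c>0$. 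Choosing $i^*$ with $|\ell_{i^*}(x)|=\|L(x)\|$, for small $x$ one gets $\phi(x)\ge c\|x\|-\|x\|^2\ge(c/2)\|x\|$. I fix a small threshold $\eta>0$ (to be pinned down below) so that $\phi(x)\ge\delta'\|x\|$ with $\delta'=c/2$ whenever $\|x\|\le\eta$.

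The remaining, and main, difficulty is to bound $\phi$ from below on the non-compact shell $\{\eta<\|x\|<1\}$: neither compactness nor the local estimate helps, since in an ultrametric the large-norm region is totally disconnected from $0$. Two facts drive the argument. (i) $\phi$ is non-increasing under $[p]$: from $h_i([p]x)-1=(h_i(x)-1)(h_i(x)^{p-1}+\cdots+1)$, the second factor being a sum of $1$-units and hence of norm $\le 1$, one gets $\phi([p]x)\le\phi(x)$. (ii) A nonzero $p$-torsion point $v\in G_1(\eo)\setminus\{0\}$ satisfies $\phi(v)\ge\rho_0:=p^{-1/(p-1)}$: by injectivity of $\alpha$ some $h_{i^*}(v)\ne 1$, and then $h_{i^*}(v)$ is a nontrivial $p$-th root of unity, so $|h_{i^*}(v)-1|=\rho_0$. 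Now given $x$ with $\|x\|>\eta$, I iterate $[p]$; since $\|[p]y\|<\|y\|$ for $0\ne y\in\Delta^d$, the norms $\|[p]^k x\|$ decrease to $0$, so there is a least $n\ge 1$ with $\|[p]^n x\|\le\eta$. Put $w=[p]^{n-1}x$, so $\|w\|>\eta$ while $[p]w$ lies in the small ball. Because $[p]$ acts on a small ball as $p\cdot\mathrm{id}$ up to higher order, Hensel's lemma lets me solve $[p]u_0=[p]w$ with $\|u_0\|$ small, and then $v:=w-_F u_0$ has $[p]v=0$, i.e. $v\in G_1(\eo)$, with $w=v+_F u_0$. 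If $v=0$ then $w=u_0$ is small and $\phi(w)\ge\delta'\|w\|>\delta'\eta$; if $v\ne 0$ then, since $|h_{i^*}(u_0)-1|\le\|u_0\|<\rho_0\le|h_{i^*}(v)-1|$, the formal multiplicative law gives $|h_{i^*}(w)-1|=|h_{i^*}(v)-1|\ge\rho_0$. In either case $\phi(x)\ge\phi(w)\ge\min(\delta'\eta,\rho_0)$ by (i).

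Combining the two regimes yields $\phi(x)\ge\delta\|x\|$ for all $x\in\Delta^d$ with $\delta=\min(\delta',\delta'\eta,\rho_0)$, using $\|x\|<1$ on the shell. I expect the genuine obstacle to be exactly this passage across the boundary: the infinitesimal injectivity coming from Tate controls only a neighbourhood of the origin, and the essential new input is that multiplication by $p$ is both norm-decreasing and $\phi$-decreasing, which drives every point either into that neighbourhood or onto a $p$-torsion point, where the universal lower bound $\rho_0$ for nontrivial $p$-th roots of unity takes over. The point needing genuine care is the Hensel step producing $w=v+_F u_0$ with $\|u_0\|$ uniformly controlled; making this precise is what fixes the admissible size of $\eta$ (one needs $p\eta<\min(c/2,\rho_0)$ and $\eta<p^{-2}$, so that $[p]$ is bijective, with linear part dominating, on the relevant small ball).
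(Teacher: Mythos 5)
Your proof is correct, and for the main difficulty --- the shell $\{\eta<\|x\|<1\}$ --- it takes a genuinely different route from the paper. The treatment near the origin is essentially the paper's assertion II (injectivity of $d\alpha$ from Tate's Proposition 11 gives a lower bound for the linear part, which dominates the quadratic remainder on a small ball). For points of large norm the paper argues by contradiction: a sequence $x^{(i)}$ with $H(x^{(i)})\to\bo$ is pushed through the logarithm exact sequence, the bounded-below linear map $A=dH$ forces $L(x^{(i)})\to 0$, a splitting $x^{(i)}=z^{(i)}\oplus y^{(i)}$ with $z^{(i)}$ torsion and $y^{(i)}\to 0$ is extracted, and discreteness of $U_{\tors}$ together with bijectivity of $\alpha_0$ kills $z^{(i)}$. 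You instead give a direct, quantitative descent: $\phi=\|H(\cdot)-\bo\|$ is non-increasing under $[p]$ because $h^p-1=(h-1)(h^{p-1}+\cdots+1)$ with the cofactor of norm $\le 1$, iterating $[p]$ drives any point into a prescribed small ball, and at the last step before entering it you split $w=v\oplus u_0$ with $[p]v=0$ and $\|u_0\|$ small via Hensel; then either the local estimate applies to $w=u_0$, or a nonzero $p$-torsion point $v$ contributes the universal lower bound $|h_{i^*}(v)-1|=p^{-1/(p-1)}$, which survives the perturbation by $u_0$ in the ultrametric. Both arguments rest on the same input (Tate's Proposition 11, injectivity of $\alpha$ on torsion and of $d\alpha$), and both ultimately decompose a point as (torsion)$\,\oplus\,$(small); what yours buys is an explicit constant $\delta=\min(\delta',\delta'\eta,p^{-1/(p-1)})$ and the avoidance of the sequential contradiction and of the full logarithm diagram, at the cost of the Hensel-lemma bookkeeping fixing the admissible $\eta$ (which you correctly identify and whose constraints $\eta<p^{-2}$, $p\eta$ within the radius of the local estimate, and $p\eta<p^{-1/(p-1)}$ are all satisfiable). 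I see no gap.
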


\begin{proof}
  The $\Z_p$-rank $r$ of $TG'$ is the height of $G'$ and hence we have $r \ge d = \dim G$. Consider the following diagram $(*)$ on p. 177 of \cite{T}:
\[
\xymatrix{
1 \ar[r] & G (\eo)_{\tors} \ar[r] \ar[d]^{\wr}_{\alpha_0} & G (\eo) \ar[r]^L \ar@{^{(}->}[d]_{\alpha} & t_G (C) \ar[r] \ar@{^{(}->}[d]_{d \alpha} & 0 \\
1 \ar[r] & \Hom (TG' , U_{\tors}) \ar[r] & \Hom (TG' , U) \ar[r]^{\log_*} & \Hom (TG' , C) \ar[r] & 0 .
}
\]
Here the $\Hom$-groups refer to continuous homomorphisms and the map $\alpha$ was defined in equation \eqref{eq:12} above. The map $L$ is the logarithm map to the tangent space $t_G (C)$ of $G$ and $\log_*$ is induced by $\log : U \to C$. According to \cite{T} proposition 11 the maps $\alpha$ and $d\alpha$ are injective and $\alpha_0$ is bijective. It will suffice to prove the following two statements:
\begin{enumerate}
\item [I)] For any $\varepsilon > 0$ there is a constant $\delta (\varepsilon) > 0$ such that
\[
\| H (x) - \bo \| \ge \delta (\varepsilon) \; \mbox{for all} \; x \in \Delta^d \; \mbox{with} \; \| x \| \ge \varepsilon \; .
\]
\item[II)] There are $\varepsilon > 0$ and $a > 0$ such that
\[
\| H (x) - \bo \| \ge a \|x \| \; \mbox{for all} \; x \in \Delta^d \; \mbox{with} \; \| x \| \le \varepsilon \; .
\]
\end{enumerate}
Identifying $G (\eo)$ with $\Delta^d$ where we write the induced group structure on $\Delta^d$ as $\oplus$, and identifying  $TG'$ with $\Z^r_p$ via the choice of the basis $h_1 , \ldots , h_r$, the above diagram becomes the following one where $A = dH$ and $H_0$ is the restriction of $H$ to $(\Delta^d)_{\tors}$
\[
\xymatrix{
0 \ar[r] &  (\Delta^d)_{\tors} \ar[r] \ar[d]^{\wr}_{H_0} & \Delta^d \ar[r]^L \ar@{^{(}->}[d]_H & C^d \ar[r] \ar@{^{(}->}[d]^{A = dH} & 0 \\
1 \ar[r] & U^r_{\tors} \ar[r] & U^r \ar[r]^{\log} & C^r \ar[r] & 0 .
}
\]
Assume that assertion $I$ is wrong for some $\varepsilon > 0$. Then there is a sequence $x^{(i)}$ of points in $\Delta^d$ with $\| x^{(i)} \| \ge \varepsilon$ such that $H (x^{(i)}) \to \bo$ for $i \to \infty$. It follows that $A (L (x^{(i)}) = \log H (x^{(i)}) \to 0$ for $i \to \infty$. Since $A$ is an injective linear map between finite dimensional $C$-vector spaces, there exists a constant $a > 0$ such that we have
\begin{equation}
  \label{eq:13}
  \| A (v) \| \ge a \|v\| \quad \mbox{for all} \; v \in C^d \; .
\end{equation}
Hence we see that $L (x^{(i)}) \to 0$ for $i \to \infty$. Since $L$ is a local homeomorphism, there exists a sequence $y^{(i)} \to 0$ in $\Delta^d$ with $L (x^{(i)}) = L (y^{(i)})$ for all $i$. The sequence $z^{(i)} = x^{(i)} \ominus y^{(i)}$ in $\Delta^d$ satisfies $L (z^{(i)}) = 0$ and hence lies in $(\Delta^d)_{\tors}$. We have $H_0 (z^{(i)}) = H (x^{(i)}) H (y^{(i)})^{-1}$. Moreover $H (x^{(i)}) \to \bo$ by assumption and $H (y^{(i)}) \to \bo$ since $y^{(i)} \to 0$. Hence $H_0 (z^{(i)}) \to 1$ and therefore $H_0 (z^{(i)}) = 1$ for all $i \gg 0$ since the subspace topology on $U_{\tors} \subset U$ is the discrete topology. The map $H_0$ being bijective we find that $z^{(i)} = 0$ for $i \gg 0$ and therefore $x^{(i)} = y^{(i)}$ for $i \gg 0$. This implies that $x^{(i)} \to 0$ for $i \to \infty$ contradicting the assumption $\| x^{(i)} \| \ge \varepsilon$ for all $i$. Hence assertion I) is proved.

We now turn to assertion II). Set $X = (X_1 , \ldots , X_d)$. Then we have
\[
H(X) = \bo + AX + (\deg \ge 2) \; .
\]
Componentwise this gives for $1 \le j \le r$
\[
h_j (x) - 1 = \sum^d_{i=1} a_{ij} x_i + (\deg \ge 2)_j \; .
\]
Let $a$ be the constant from equation \eqref{eq:13} and  choose $\varepsilon > 0$, such that for $\| x \| \le \varepsilon$ we have
\[
\| (\deg \ge 2)_j \| \le \frac{a}{2} \| x \| \quad \mbox{for} \; 1 \le j \le r \; .
\]
For any $x$ with $\| x \| < \varepsilon$, according to \eqref{eq:13} there is an index $j$ with
\[
\Big| \sum^d_{i=1} a_{ij} x_i \Big| \ge a \|x \| \; .
\]
This implies that we have
\[
| h_j (x) - 1 | = \Big| \sum^d_{i=1} a_{ij} x_i + (\deg \ge 2)_j \Big| = \Big| \sum^d_{i=1} a_{ij} x_i \Big| \ge a \| x \| 
\]
and hence
\[
\| H (x) - \bo \| \ge a \| x \| \; .
\]
\end{proof}

\section{The connected case II (the $p$-adic Corona problem)}
\label{sec:4}

As remarked in the previous section we need a version of the Hilbert Nullstellensatz in $C \langle X_1 , \ldots , X_d \rangle$ for the case where the zero set is $\{ 0 \} \subset \Delta^d$. The only result for $C \langle X_1 , \ldots , X_d \rangle$ in the spirit of the Nullstellensatz that I am aware of concerns an empty zero set:

\begin{coronatheorem}[van~der~Put, Bartenwerfer]
  \label{t9}
For $f_1 , \ldots , f_n$ in $C \langle X_1 , \ldots , X_d \rangle$ the following conditions are equivalent:
\begin{enumerate}
\item [1)] The functions $f_1 , \ldots , f_n$ generate the $C$-algebra $C \langle X_1 , \ldots , X_d \rangle$.
\item[2)] There is a constant $\delta > 0$ such that
\[
\max_{1 \le j \le n} |f_j (x)| \ge \delta \quad \mbox{for all} \; x \in \Delta^d \; .
\]
\end{enumerate}
\end{coronatheorem}

It is clear that the first condition implies the second. The non-trivial implication was proved by van~der~Put for $d = 1$ in \cite{P} and by Bartenwerfer in general, c.f. \cite{B}. Both authors give a more precise statement of the theorem where the norms of possible functions $g_j$ with $\sum_j f_j g_j = 1$ are estimated.

Consider the following conjecture which deals with the case where the zero set may contain $\{ 0 \}$.

\begin{conj}
  \label{t10}
For $g_1 , \ldots , g_n$ in $C \langle X_1 , \ldots , X_d \rangle$ the following conditions are equivalent:
\begin{enumerate}
\item [1)] $(g_1 , \ldots , g_n) \supset (X_1 , \ldots , X_d)$.
\item[2)] There is a constant $\delta > 0$ such that
  \begin{equation}
    \label{eq:14}
    \max_{1 \le j \le n} |g_j (x)| \ge \delta \| x \| \quad \mbox{for all} \; x \in \Delta^d \; .
  \end{equation}
\end{enumerate}
\end{conj}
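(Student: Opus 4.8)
The plan is to prove the two implications separately, treating 1)~$\Rightarrow$~2) as routine and reducing the reverse implication to the Corona theorem~\ref{t9}. I would first dispose of 1)~$\Rightarrow$~2). Write $\|f\| = \sup_{\alpha} |a_{\alpha}|$ for the Gauss norm of $f = \sum_{\alpha} a_{\alpha} X^{\alpha}$, so that $|f(x)| \le \|f\|$ for all $x \in \Delta^d$. If $(g_1 , \ldots , g_n) \supseteq (X_1 , \ldots , X_d)$, choose relations $X_i = \sum_j g_j h_{ij}$ and set $M = \max_{i,j} \|h_{ij}\|$, which is finite and positive. The ultrametric inequality then gives
\[
|x_i| = \Big| \sum_j g_j (x) h_{ij} (x) \Big| \le M \max_j |g_j (x)| \qquad (x \in \Delta^d) \; ,
\]
and taking the maximum over $i$ yields \eqref{eq:14} with $\delta = M^{-1}$.

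For the substantial implication 2)~$\Rightarrow$~1), I would distinguish cases according to the constant terms $c_j = g_j (0)$. Suppose first that some $c_j \neq 0$, and put $\eta = \max_j |c_j| > 0$, attained at $j_0$. Since $g_{j_0} - c_{j_0}$ has vanishing constant term, every monomial occurring in it has total degree $\ge 1$, whence $|g_{j_0} (x) - c_{j_0}| \le \|g_{j_0}\| \, \|x\|$ on $\Delta^d$; therefore $|g_{j_0} (x)| = \eta$ whenever $\|x\| < \rho := \eta / \|g_{j_0}\|$, while for $\|x\| \ge \rho$ hypothesis \eqref{eq:14} gives $\max_j |g_j (x)| \ge \delta \rho$. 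Hence $\max_j |g_j (x)| \ge \min (\eta , \delta \rho) > 0$ uniformly on $\Delta^d$, and theorem~\ref{t9} forces $(g_1 , \ldots , g_n)$ to be the unit ideal, which in particular contains $(X_1 , \ldots , X_d)$. This step is valid in every dimension.

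There remains the case in which all $g_j$ vanish at the origin, and this is where I expect the real difficulty to sit. For $d = 1$ it reduces cleanly to van~der~Put's theorem: each $g_j$ factors as $g_j = X \tilde{g}_j$ with $\tilde{g}_j \in C \langle X \rangle$ bounded, condition \eqref{eq:14} becomes $\max_j |\tilde{g}_j (x)| \ge \delta$ for $x \neq 0$ and hence, by continuity, for all $x \in \Delta$, so theorem~\ref{t9} produces $1 = \sum_j \tilde{g}_j h_j$ and therefore $X = \sum_j g_j h_j \in (g_1 , \ldots , g_n)$, completing $d = 1$. For $d \ge 2$ the factorisation is the obstacle. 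One can still write $g_j = \sum_{i=1}^d X_i g_{ji}$ with bounded $g_{ji}$ (by distributing monomials, so $\|g_{ji}\| \le \|g_j\|$), so that the problem becomes inverting the $n \times d$ matrix $(g_{ji})$ against the generators $X_1 , \ldots , X_d$ with bounded coefficients; but there is no single factor to cancel, the common zero at $o$ now has codimension $d$, and the scalar (empty-zero-set) statement~\ref{t9} supplies no control of the division near the origin. A successful argument would seem to require a matrix- or ideal-theoretic refinement of theorem~\ref{t9}, plausibly via a generalisation of Bartenwerfer's rigid cohomology with bounds, which is exactly why the statement is only conjectural for $d \ge 2$.
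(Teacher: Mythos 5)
Your proposal is correct and follows essentially the same route as the paper: the easy implication via Gauss-norm estimates, the reduction to the case where all $g_j$ vanish at the origin by combining a local lower bound near $0$ with \eqref{eq:14} and invoking the Corona theorem \ref{t9}, and for $d=1$ the factorization $g_j = X\tilde g_j$ feeding into van~der~Put's theorem exactly as in the paper's proposition \ref{t11}. You also correctly identify that only the $d=1$ case is actually provable by these means and that the statement remains conjectural for $d\ge 2$, which matches the paper's own position.
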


As above, immediate estimates show that the first condition implies the second. Note also that if some $g_j$ does not vanish at $x = 0$ we have 
\[
\max_{1 \le j \le n} |g_j (x)| \ge \delta' > 0 \; \mbox{in a neighborhood of} \; x = 0 \; . 
\]
Together with \eqref{eq:14} this implies that 
\[
\max_{1 \le j \le n} |g_j (x)| \ge \delta'' > 0 \quad \mbox{for all} \; x \in \Delta^d \; .
\]
The $p$-adic Corona theorem then gives $(g_1 , \ldots , g_n) = C \langle X_1 , \ldots , X_d \rangle$. Thus condition 1 follows in this case.

\begin{prop}
  \label{t11}
The preceeding conjecture is true for $d = 1$.
\end{prop}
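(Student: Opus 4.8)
The plan is to prove the nontrivial implication $2) \Rightarrow 1)$; the reverse implication has already been recorded. By the remark following the conjecture I may assume that every $g_j$ vanishes at the origin, since if some $g_j(0) \ne 0$ then condition 1) already follows from the $p$-adic Corona theorem \ref{t9}.

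The point I want to exploit is that in one variable division by $X$ stays inside the algebra. Writing $g_j = \sum_{k \ge 0} a_k^{(j)} X^k$ with bounded coefficients $a_k^{(j)} \in C$ and $a_0^{(j)} = g_j(0) = 0$, the shifted series $\tilde g_j := \sum_{k \ge 0} a_{k+1}^{(j)} X^k$ again has bounded coefficients, so that $\tilde g_j \in C \langle X \rangle$ and $g_j = X \tilde g_j$. For $x \in \Delta^1$ with $x \ne 0$ the hypothesis \eqref{eq:14} reads $|x| \cdot \max_j |\tilde g_j(x)| \ge \delta |x|$, and dividing by $|x|$ gives $\max_j |\tilde g_j(x)| \ge \delta$. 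Since the $\tilde g_j$ are continuous and $\Delta^1 \setminus \{0\}$ is dense in $\Delta^1$, this bound extends to all of $\Delta^1$, so the $\tilde g_j$ satisfy condition 2) of the $p$-adic Corona theorem \ref{t9} with empty zero set.

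I would then invoke that theorem to produce $u_1 , \ldots , u_n \in C \langle X \rangle$ with $\sum_j u_j \tilde g_j = 1$. Multiplying by $X$ yields $\sum_j u_j g_j = X$, hence $X \in (g_1 , \ldots , g_n)$ and therefore $(g_1 , \ldots , g_n) \supset (X)$, which is condition 1).

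The single step that genuinely uses $d = 1$, and hence the obstacle to extending the argument, is the division by $X$: in one variable a bounded function vanishing at $0$ factors through the unique generator of the maximal ideal, reducing everything cleanly to the empty-zero-set Corona theorem. For $d \ge 2$ the ideal $(X_1 , \ldots , X_d)$ has several generators and a function vanishing at $0$ need not be divisible by any single $X_i$, so no such reduction is available — precisely the difficulty noted in the introduction.
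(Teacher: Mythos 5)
Your proof is correct and follows essentially the same route as the paper: assume all $g_j$ vanish at the origin, divide by $X$ to get functions in $C\langle X\rangle$ satisfying the hypothesis of the $p$-adic Corona theorem, and multiply the resulting Bezout identity back by $X$. The only addition is your explicit continuity/density argument for extending the lower bound to $x=0$, a detail the paper leaves implicit.
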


\begin{proof}
  As explained above, we may assume that all functions $g_1 , \ldots , g_n$ vanish at $x = 0$. Then $f_j (X) = X^{-1} g_j (X)$ is in $C \langle X \rangle$ for every $1 \le j \le n$ and estimate \eqref{eq:14} implies the estimate
\[
\max_{1 \le j \le n} |f_j (x)| \ge \delta \quad \mbox{for all} \; x \in \Delta^1 \; .
\]
The $p$-adic Corona theorem for $d = 1$ now shows that
\[
(f_1 , \ldots , f_n) = (1) \quad \mbox{and hence} \quad (g_1 , \ldots , g_n) = (X) \; .
\]
\end{proof}

Let us now return to $p$-divisible groups and recall the surjection \eqref{eq:11}:
\begin{equation}
  \label{eq:15}
  I / J \twoheadrightarrow \Coker (\eo \to A \hat{\otimes}_R \eo / \tJ) \; .
\end{equation}
Here $I = (X_1 , \ldots , X_d)$ in $\eo [[X_1 , \ldots , X_d]]$ and $J$ is the ideal generated by the elements $h-1$ for $h \in TG'$. Let $J_0 \subset J$ be the ideal generated by the elements $h_1 - 1 , \ldots , h_r -1$ where $h_1 , \ldots , h_r$ form a $\Z_p$-basis of $TG'$. In proposition \ref{t8} we have seen that for some $\delta > 0$ we have
\[
\max_{1 \le j \le r} |h_j (x) - 1| \ge \delta \|x \| \quad \mbox{for all} \; x \in \Delta^d \; .
\]
Conjecture \ref{t10} (which is true for $d = 1$) would therefore imply
\[
(h_1 -1 , \ldots , h_r -1) = (X_1 , \ldots , X_d) \quad\mbox{in} \; C \langle X_1 , \ldots , X_d \rangle \; .
\]
Thus we would find some $t \ge 1$, such that we have
\[
p^t X_i \in J_0 \subset \eo [[ X_1 , \ldots , X_d]] \quad \mbox{for all} \; 1 \le j \le r
\]
and hence also $p^t I \subset J_0 \subset J$. Using the surjection \eqref{eq:15} this would prove claim \ref{t6} and hence theorem \ref{t3} without restriction on $\dim G$. Also theorem \ref{t1} would follow without restriction on $\dim G'$. As it is we have to assume $\dim G \le 1$ resp. $\dim G' \le 1$ in these assertions.

\end{document}